\definecolor{webgreen}{rgb}{0,.5,0}
\definecolor{webbrown}{rgb}{0,0,.5}
\newcommand{\seqnum}[1]{\href{http://oeis.org/#1}{\underline{#1}}}
\theoremstyle{plain}
\newtheorem{theorem}{Theorem}
\newtheorem{corollary}{Corollary}
\newtheorem{lemma}{Lemma}
\newtheorem{proposition}{Proposition}
\theoremstyle{definition}
\newtheorem{definition}{Definition}
\newtheorem{example}{Example}
\newtheorem{conjecture}{Conjecture}
\renewcommand{\subjclassname}{AMS \textup{2020} Mathematics Subject Classification: 11B99, 11A07, 11A51}
\theoremstyle{remark}
\newtheorem{remark}{Remark}
\author{J.M. Grau}
\address{Departamento de Matemáticas, Universidad de Oviedo\\ Avda. Calvo Sotelo, s/n, 33007 Oviedo, Spain}
\email{grau@uniovi.es}
\author{A. M. Oller-Marc\'{e}n}
\address{Centro Universitario de la Defensa\\ Ctra. de Huesca, s/n, 50090 Zaragoza, Spain}
\email{oller@unizar.es}
\author{D. Sadornil}
\address{Departamento de Matemáticas, Estadística y Computación,
Universidad de Cantabria\\ F. Ciencias, Avda de los Castros s/n,
39005 Santander, Spain}
\email{daniel.sadornil@unican.es}
\title{On $\mu$-Sondow numbers}
\begin{document}
\maketitle

\begin{abstract}
Given an integer $\mu$, we study the numbers that satisfy the condition $\frac{\mu}{n} + \sum_ {p \mid n} \frac {1} {p} \in \mathbb{N}$. This condition, which is reminiscent of the one satisfied by Giuga numbers ($\mu=-1$), also includes the so-called \cite{sondow} weak primary pseudoperfect numbers ($\mu=1$). As a tribute to our late colleague Jonathan Sondow (1943 -- 2020), we have named these numbers $ \mu $-Sondow numbers. In this paper, we give several different characterizations of these numbers, all of them suggested by well-known characterizations of the Giuga numbers. We also relate these numbers to the well-known Erd\"os-Moser equation and we present some conjectures about them.
\end{abstract}

\subjclassname{}

\section{Introduction}

In  number theory, a \emph{primary pseudoperfect number} is an integer $n>1$ that satisfies the Egyptian fraction equation
\begin{equation}
\label{defppn}
\sum_{p \mid n}\frac{1}{p}+\frac{1}{n} =1.
\end{equation}
Primary pseudoperfect numbers were first introduced and investigated by Butske et al. in  2000 \cite{BUT}. Using computational search techniques, they proved the remarkable result that, for each positive integer $r$ up to 8, there exists exactly one primary pseudoperfect number with precisely $r$ distinct prime factors. To date, only 8 primary pseudoperfect numbers are known (sequence \seqnum{A054377} in OEIS). Namely,
$$2, 6, 42, 1806, 47058, 2214502422, 52495396602, 8490421583559688410706771261086.$$
It is interestiing to point out that, as it was observed by Sondow and MacMillan in 2017 \cite{sondow2}, if we reduce modulo 288 those primary pseudoperfect numbers with $2 \leq r \leq 8$, we get an arithmetic progression of difference $36$:
$$\{6, 42, 78, 114, 150, 186, 222\}.$$

Borwein et al. \cite{BOR} introduced the so-called \emph{Giuga numbers}. A Giuga number is a composite integer $n$ such that $p\mid (n/p-1)$ for every prime divisor $p$ of $n$. These numbers were named after the Italian mathematician Giuseppe Giuga, who first encountered them while searching for primality conditions \cite{GIU}. Up to date, only thirteen Giuga numbers are known (sequence \seqnum{A007850} in OEIS). The first twelve ones are
\begin{center}
\begin{tabular}{|c|c|}
\hline Giuga number & Prime factorization\\
\hline 30 & $2\cdot 3\cdot 5$\\
\hline 858 & $2\cdot 3\cdot 11\cdot 13$\\
\hline 1722 & $2\cdot 3\cdot 7\cdot 41$\\
\hline 66198 & $2\cdot 3\cdot 11\cdot 17\cdot 59$\\
\hline 2214408306 & $2\cdot 3\cdot 11\cdot 23\cdot 31\cdot 47057$\\
\hline 24423128562 & $2\cdot 3\cdot 7\cdot 43\cdot 3041\cdot 4447$\\
\hline 432749205173838 & $2 \cdot 3 \cdot 7 \cdot 59 \cdot 163 \cdot 1381 \cdot 775807$\\
\hline 14737133470010574 & $2 \cdot 3 \cdot 7 \cdot 71 \cdot 103 \cdot 67213 \cdot 713863$\\
\hline 550843391309130318 & $2 \cdot 3 \cdot 7 \cdot 71 \cdot 103 \cdot 61559 \cdot 29133437$\\
\hline 244197000982499715087866346 & $2 \cdot 3 \cdot 11 \cdot 23 \cdot 31 \cdot 47137 \cdot 28282147\cdot 3892535183$\\
\hline 554079914617070801288578559178 & $2 \cdot 3 \cdot 11 \cdot 23 \cdot 31 \cdot 47059 \cdot 2259696349 \cdot 110725121051$\\
\hline 1910667181420507984555759916338506 & $2 \cdot  3 \cdot 7 \cdot  43\cdot  1831 \cdot 138683 \cdot 2861051\cdot  1456230512169437$\\
\hline
\end{tabular}
\end{center}

One further Giuga number with 97 digits and 10 prime factors is known, namely:

{\scriptsize\begin{align*}  &4200017949707747062038711509670656632404195753751630609228764416142557211582098432545190323474818 = \\
 &= 2  \cdot 3  \cdot 11  \cdot 23  \cdot 31  \cdot 47059  \cdot 2217342227  \cdot 1729101023519 \cdot 8491659218261819498490029296021  \cdot 58254480569119734123541298976556403 \end{align*}}

The behavior modulo 288 of the primary pseudoperfect numbers observed by Sondow is somewhat reproduced in the context of Giuga numbers. If we reduce the first twelve Giuga numbers modulo $288$, we get the following sequence.
$$ 30, 282, 282, 246, 210, 210, 174, 174, 174, 138, 138, 138 $$
which, if we ignore the repetitions, is an arithmetic progression of difference $-36$. Moreover, the residue of a Giuga number modulo 288 seems to be closely related to the number of its prime factors. In fact, note that Giuga numbers with the same number of prime factors turn out to be equal when reduced modulo 288. Moreover, the 13th Giuga number is congruent to 66 modulo 288, and the fact that 102 is missing in the previous sequence might be related to the fact that no Giuga number with 9 prime factors is known.\footnote{See https://math.stackexchange.com/q/2432325}

Giuga numbers admit several characterizations, leading to apparently different definitions depending on the context. Some of them are summarized in the following proposition \cite{AGO,BOR,GIU,GOL,VARI}.
\begin{proposition}
\label{defsGiuga}
Let $n$ be a composite integer. Then, the following are equivalent:
\begin{itemize}
\item[i)] $p\mid (n/p-1)$ for every prime divisor $p$ of $n$.
\item[ii)] $\sum_{p\mid n}\frac{1}{p}-\prod_{p\mid n}\frac{1}{p}\in\mathbb{N}$.
\item[iii)] ${\sum_{j=1}^{n-1}  j^{\phi(n)} \equiv -1} \pmod n$, where $\phi$ is Euler's totient function.\footnote{The function $\phi$ can be replaced by the Carmichael lambda function \cite{VARI}.}
\item[iv)] $nB_{\phi(n)} \equiv -1  \pmod n$, where $B$ is a Bernoulli number.
\item[v)] $n'=an+1$ for some $a\in \mathbb{N}$, where $n'$ denotes the arithmetic derivative.\footnote{The arithmetic derivative \cite{BAR} of an integer $n=\prod_{i=1}^{k} p_i ^{n_i}$ is given by $ n'= n \sum_{i=1}^k\frac{n_i}{p_i}.$ This was first introduced by the Spanish mathematician Jos\'e Mingot Shelly in 1911 \cite{espanol}.}
\end{itemize}
\end{proposition}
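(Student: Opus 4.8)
The plan is to take characterization (i) as the hub and prove each of (ii), (iii), (iv) and (v) equivalent to it, after first observing that every one of the five conditions forces $n$ to be squarefree. Write $n=p_1^{a_1}\cdots p_k^{a_k}$ with the $p_i$ distinct. Condition (i) is incompatible with any $a_i\ge2$, since then $p_i\mid n/p_i$ while $p_i\mid n/p_i-1$. For (v), $n'=\sum_i a_i\,(n/p_i)$, and modulo a prime $p_j$ with $a_j\ge2$ every summand vanishes, so $n\mid n'-1$ fails. For (iii), among $0,1,\dots,n-1$ each residue class modulo a prime $p\mid n$ occurs exactly $n/p$ times, whence $\sum_{j=1}^{n-1}j^{\phi(n)}\equiv (n/p)\sum_{b=0}^{p-1}b^{\phi(n)}\pmod p$, which is $\equiv0\pmod p$ as soon as $p^2\mid n$, contradicting $\equiv-1\pmod n$. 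For (iv), the von Staudt--Clausen theorem gives $v_p\bigl(B_{\phi(n)}\bigr)=-1$ for every prime $p\mid n$ (because $(p-1)\mid\phi(n)$ and the denominator of $B_{\phi(n)}$ is squarefree), so $v_p\bigl(nB_{\phi(n)}\bigr)=v_p(n)-1\ge1$ whenever $p^2\mid n$, again contradicting $nB_{\phi(n)}\equiv-1\pmod n$. From here on $n=p_1\cdots p_k$ is squarefree and, being composite, $k\ge2$.

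For squarefree $n$ one has $\prod_{p\mid n}\tfrac1p=\tfrac1n$, $\sum_{p\mid n}\tfrac1p=\tfrac1n\sum_i\tfrac n{p_i}$ and $n'=n\sum_i\tfrac1{p_i}=\sum_i\tfrac n{p_i}$, so both (ii) and (v) say exactly that $\tfrac1n\bigl(\sum_i\tfrac n{p_i}-1\bigr)$ is a natural number, i.e.\ that $n\mid\sum_i\tfrac n{p_i}-1$ (positivity of the quotient is automatic, since $\sum_i n/p_i\ge2$ when $k\ge2$). Reducing this divisibility modulo $p_j$ kills every term $n/p_i$ with $i\ne j$, so it is equivalent to $p_j\mid n/p_j-1$ for all $j$, which is (i). This gives (i)$\Leftrightarrow$(ii)$\Leftrightarrow$(v).

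For (iii) and (iv) I would run the same reduction modulo each prime $p\mid n$. From Fermat's little theorem, $\sum_{b=0}^{p-1}b^{m}\equiv-1\pmod p$ whenever $(p-1)\mid m$ and $m>0$; since $\phi(n)=\prod_i(p_i-1)$ is a positive multiple of $p_j-1$, the congruence displayed above becomes $\sum_{j=1}^{n-1}j^{\phi(n)}\equiv-\,n/p_j\pmod{p_j}$ for every $j$. In parallel, von Staudt--Clausen yields $p_jB_{\phi(n)}\equiv-1\pmod{p_j}$, hence $nB_{\phi(n)}=(n/p_j)\,p_jB_{\phi(n)}\equiv-\,n/p_j\pmod{p_j}$ (and, incidentally, $nB_{\phi(n)}$ is $p_j$-integral, so the congruence in (iv) is well defined); the same information could be extracted from Faulhaber's formula $\sum_{j=1}^{n-1}j^{m}=\tfrac1{m+1}\sum_{r=0}^{m}\binom{m+1}{r}B_r\,n^{m+1-r}$, whose leading term is $nB_m$ and whose remaining terms can be shown to be divisible by $n$. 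By the Chinese Remainder Theorem, each of $\sum_{j=1}^{n-1}j^{\phi(n)}\equiv-1\pmod n$ and $nB_{\phi(n)}\equiv-1\pmod n$ is therefore equivalent to $n/p_j\equiv1\pmod{p_j}$ for all $j$, i.e.\ to (i), completing the four equivalences.

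I expect the only genuinely delicate point to be (iv): one must make sense of the congruence $nB_{\phi(n)}\equiv-1\pmod n$ although $B_{\phi(n)}\notin\mathbb Z$, and the whole argument there rests on using the von Staudt--Clausen theorem to supply simultaneously the $p$-integrality of $nB_{\phi(n)}$ for $p\mid n$ and the residue $pB_{\phi(n)}\equiv-1\pmod p$. If one instead proceeds through Faulhaber's formula, the subtle step becomes verifying that the non-leading terms there — each involving a Bernoulli number with a denominator — nevertheless vanish modulo $n$ at every prime dividing $n$. All the rest is the routine bookkeeping of reducing to the single divisibility $n\mid\sum_{p\mid n}(n/p)-1$.
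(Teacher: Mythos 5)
Your proof is correct. Note that the paper itself does not prove Proposition~\ref{defsGiuga}: it is quoted from the literature (Agoh, Borwein et al., Giuga, Kellner, \dots), and the closest thing to an in-paper proof is the cyclic argument i)$\Rightarrow$ii)$\Rightarrow\cdots\Rightarrow$vi)$\Rightarrow$i) given for the mirror-image theorem on weak primary pseudoperfect numbers, which leans on two imported black boxes: Kellner's congruence $\sum_{i=1}^{n-1}i^k\equiv nB_k\equiv-\sum_{p\mid n,\,p-1\mid k}n/p\pmod n$ (Lemma~\ref{lema1}) and the Ufnarovski--{\AA}hlander corollary that $n'=an\mp1$ forces $n$ squarefree (Lemma~\ref{nada}). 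You instead make everything self-contained: you first knock out non-squarefree $n$ separately for each of the five conditions (a direct valuation argument for the arithmetic derivative, residue-class counting for the power sum, von Staudt--Clausen for the Bernoulli condition), and then, with $n$ squarefree, reduce all five statements to the single divisibility $n\mid\sum_{p\mid n}(n/p)-1$ via CRT, using (i) as a hub rather than a cycle. What this buys is transparency -- the reader sees exactly which classical facts (Fermat, von Staudt--Clausen, CRT) drive each equivalence, and the delicate point you flag about the meaning of $nB_{\phi(n)}\equiv-1\pmod n$ for non-integral $B_{\phi(n)}$ is handled explicitly via $p$-integrality, whereas the paper buries it in the convention stated before Lemma~\ref{lema1} and in the citation of Kellner. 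The cost is length; the paper's route through Lemma~\ref{lema1} dispatches ii)$\Leftrightarrow$iii)$\Leftrightarrow$iv) in one line each. Both arguments are the same mathematics underneath, and yours generalizes just as readily to the $\mu$-Sondow setting of Section~4.
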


It is clear from their definition that Giuga numbers are squarefree. Consequently, Proposition \ref{defsGiuga} ii) can be rewritten so that a Giuga number is a composite integer that satisfies the condition
\begin{equation}
\label{defgn}
\sum_{p \mid n}\frac{1}{p}-\frac{1}{n}\in\mathbb{N}.
\end{equation}

Now, the strong resemblance between \eqref{defppn} and \eqref{defgn} motivates the following definition, which yields a natural generalization of primary pseudoperfect numbers.

\begin{definition}
An integer $n$ is a \emph{weak primary pseudoperfect number} if it satisfies the condition
\begin{equation}
\label{defwppn}
\frac{1}{n}+\sum_{p \mid n}\frac{1}{p} \in \mathbb{N}.
\end{equation}
\end{definition}

These numbers were first introduced in \cite{sondow}, when studying the solutions to the congruence
$$1^{kn} + 2^{kn} + \dotsb + (kn)^{kn}\equiv n \pmod{kn}.$$
In that work it was proved that if $(k,n)$ is a solution of this congruence, then $k$ must be a weak primary pseudoperfect number. The converse, however, is not true since there is no solution with $k=52495396602$.

Obviously, primary pseudoperfect numbers are just a particular case of weak primary pseudoperfect numbers and, in fact, all known weak primary pseudoperfect numbers up to date happen to be also primary pseudoperfect numbers. It is easy to see that any weak primary pseudoperfect number which is not a primary pseudoperfect number must have more that 58 prime factors, and therefore must be greater than $10^{110}$. This might partly explain the difficulty in finding such an example \cite{sondow}.

The first goal of this paper is to provide an analogue to Proposition \ref{defsGiuga} for weak primary pseudoperfect numbers, leading to several different characterizations of this class of numbers. This is done is Section 3, after some technical results. Furthermore, we will see that Proposition \ref{defsGiuga} and our analogue (given in Theorem \ref{charwpsn}) share a natural generalization that leads to the introduction of a new class of numbers. We decided to call these numbers $\mu$-Sondow numbers in memory of our late colleague Jonathan Sondow (1943--2020). Recall that given an integer $n$, the $p$-adic order of $n$ is just $\nu_p(n)=\max\{s:p^s\mid n\}$. Then, we have the following.

\begin{definition}
Given $ \mu \in \mathbb{Z}$, a \textit{$\mu$-Sondow number} is an integer $n$ such that $p^{\nu_p(n)} \mid (n/p +\mu)$ for every prime $p \mid n$.
\end{definition}

With this definition, as we will see, Giuga numbers are just composite $(-1)$-Sondow numbers\footnote{The set of $(-1)$-Sondow numbers was registered at the OEIS by Jonathan Sondow a few months before his death \seqnum{A326715}. This motivated our definition.}, while weak primary pseudoperfect numbers are just $1$-Sondow numbers. Then, our second goal of this paper will be to characterize and study some general properties of $\mu$-Sondow numbers. This is done in Section 4. Finally, as an application, we relate $\mu$-Sondow numbers to the well-known Erd\"os-Moser equation \cite{moree2} in the last section of the paper.

\section{Some technical results}

In this section, we provide the main technical tools that will be required in order to prove our main result. We begin with a well-known lemma whose proof can be found in \cite[Theorem 1.]{KELL}. Recall that, if $r_1, r_2\in\mathbb{Q}$, the statement $r_1\equiv r_2\pmod{n}$ means that $n$ divides the numerator of $r_1-r_2$.

\begin{lemma}
\label{lema1}
Let $k$ and $n$ be positive integers with $k$ even, and $n>1$. Then,
$$
\sum_{i=1}^{n-1} i^k \equiv n\cdot B_k \equiv - \sum_{\substack{p \mid n\\ p-1 \mid k}}\frac{n}{p} \pmod{n},
$$
where $B_k$ is the $k$-th Bernoulli number.
\end{lemma}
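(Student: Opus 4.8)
The plan is to prove Lemma~\ref{lema1} in two halves: first the congruence $\sum_{i=1}^{n-1} i^k \equiv n B_k \pmod{n}$, and then $n B_k \equiv -\sum_{p\mid n,\, p-1\mid k} n/p \pmod{n}$, the second being essentially the Von Staudt--Clausen theorem in disguise. For the first half, I would invoke Faulhaber's formula (the Bernoulli-polynomial expression for power sums): $\sum_{i=1}^{n-1} i^k = \frac{1}{k+1}\sum_{j=0}^{k}\binom{k+1}{j} B_j\, n^{k+1-j}$. Every term with $k+1-j\ge 2$ carries a factor $n^2$, hence contributes $0$ modulo $n$ once one checks that the rational coefficient $\frac{1}{k+1}\binom{k+1}{j}B_j$ has denominator prime to $n$ in the relevant range — this is the technical nuisance, handled by the Von Staudt--Clausen control on $\mathrm{denom}(B_j)$ together with the elementary fact that $\frac{1}{k+1}\binom{k+1}{j} = \frac{1}{j}\binom{k}{j-1}$ clears most of the $k+1$. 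The only surviving term is $j=k$, namely $\frac{1}{k+1}\binom{k+1}{k}B_k\, n = B_k\, n$, giving the claim. (One uses $k$ even so that $B_{k-1}=0$ does not even need separate attention, and more importantly so that $B_k$ is the only odd-index-free survivor; the term $j=k-1$ would give $\binom{k+1}{2}B_{k-1}n^2\equiv 0$ anyway.)

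For the second half I would appeal directly to the Von Staudt--Clausen theorem, which states $B_k + \sum_{p-1\mid k} \frac{1}{p} \in \mathbb{Z}$ for even $k\ge 2$; equivalently the fractional part of $B_k$ is $-\sum_{p-1\mid k}\frac1p \pmod 1$. Multiplying through by $n$ and reducing modulo $n$ in the sense defined in the excerpt (where $r_1\equiv r_2\pmod n$ means $n$ divides the numerator of $r_1-r_2$ in lowest terms), I get $n B_k \equiv -\sum_{p-1\mid k} \frac{n}{p} \pmod n$. The sum on the right can be restricted to primes $p\mid n$: if $p\nmid n$ then $n/p$ has denominator divisible by $p$ and this term does not affect the numerator's divisibility by $n$ — more carefully, $\sum_{p-1\mid k,\ p\nmid n}\frac{n}{p}$ is a rational number whose reduced denominator is a product of primes not dividing $n$, so it is $\equiv 0\pmod n$. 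Hence only the primes $p\mid n$ with $p-1\mid k$ survive, matching the statement.

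The main obstacle is the bookkeeping in the first half: justifying that each higher-order term $\frac{1}{k+1}\binom{k+1}{j}B_j n^{k+1-j}$ is genuinely $\equiv 0 \pmod n$ despite $B_j$ and $\frac{1}{k+1}$ contributing denominators. The key estimates are (i) $\mathrm{denom}(B_j)$ is squarefree and, by Von Staudt--Clausen, divisible only by primes $q$ with $q-1\mid j$, so in particular every such $q\le j+1\le k$; (ii) the identity $\frac{1}{k+1}\binom{k+1}{j}=\frac{1}{j}\binom{k}{j-1}$ shows the only denominator contributed by the binomial coefficient divides $j$; and (iii) since $k+1-j\ge 2$ we have $n^{k+1-j}$ divisible by $n\cdot n^{k-j}$, and one shows $n^{k-j}$ absorbs all the denominators coming from $B_j$ and from $j$, because those denominators are built from primes $\le k$ each appearing to the first power, while $n^{k-j}$ contains each prime factor of $n$ to a high power — care is only needed for small $n$ and $j$ close to $k$, i.e.\ $j=k-1$, where $n^{k-j}=n$; there $B_{k-1}=0$ for $k$ even (as $k-1\ge 3$ is odd), so that term vanishes outright. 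With these three observations the reduction is clean, and the rest of the argument is the routine combination of Faulhaber and Von Staudt--Clausen described above.
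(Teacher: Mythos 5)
The paper never proves this lemma: it is quoted as a known result with a pointer to Kellner \cite{KELL}, so there is no in-paper argument to compare yours against. Your route --- Faulhaber's formula for the first congruence, von Staudt--Clausen for the second --- is the standard one (and essentially Kellner's). The second half of your argument is correct as written, including the step that discards the primes with $p-1\mid k$ but $p\nmid n$ because their contribution is a fraction whose reduced denominator is coprime to $n$.

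Two points in the first half need repair. First, the denominators to be absorbed are not all squarefree: writing $m=k+1-j\ge 2$, the coefficient $\frac{1}{k+1}\binom{k+1}{j}=\frac{1}{m}\binom{k}{j}$ can contribute $v_p(m)$ to the denominator, and $m$ need not be squarefree, so your claim that the offending primes appear ``each to the first power'' is false. The inequality that actually closes the argument is $(m-1)a\ge v_p(m)+1$ for $p^a\mid\mid n$, using $v_p(B_j)\ge -1$ from von Staudt--Clausen; this holds for every $m\ge 3$ because $v_p(m)\le \log_2 m\le m-2$, and fails only for $m=2$ with $p=2$. Second, your fix for that remaining case, namely ``$B_{k-1}=0$'', silently assumes $k\ge 4$; the lemma (and its application with $k=\phi(n)$) includes $k=2$, where $j=1$ and $B_1=-\tfrac12\ne 0$. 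That case still goes through --- the term is $\tfrac12\binom{2}{1}B_1n^2=-n^2/2$, whose numerator is divisible by $n$ since $2a-1\ge a$ --- but it must be checked, not waved away. With those two patches your proof is complete and is, in substance, the argument the paper outsources to the literature.
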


The following lemma will also be useful in the sequel. 

\begin{lemma}
\label{foro1}
Let $\mu \in \mathbb{Z}$. Then $\displaystyle \sum_{i=1}^{n-1} i^{\phi(n)} \equiv \mu \pmod {n}$ if and only if $p^{\nu_p(n)} | (n/p + \mu)$ for every prime $p\mid n$.
\end{lemma}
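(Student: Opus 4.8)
The key observation is that $\phi(n)$ is even for all $n \geq 3$, so Lemma \ref{lema1} applies with $k = \phi(n)$ (the small cases $n = 1, 2$ can be checked directly, or the statement is vacuous/trivial there). Applying Lemma \ref{lema1} with $k = \phi(n)$ gives
$$\sum_{i=1}^{n-1} i^{\phi(n)} \equiv - \sum_{\substack{p \mid n \\ p-1 \mid \phi(n)}} \frac{n}{p} \pmod{n}.$$
But $p - 1 \mid \phi(n)$ holds for \emph{every} prime divisor $p$ of $n$, since $\phi(n)$ is a multiple of $\phi(p^s) = p^{s-1}(p-1)$ whenever $p^s \mid\mid n$. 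Hence the congruence simplifies to
$$\sum_{i=1}^{n-1} i^{\phi(n)} \equiv - \sum_{p \mid n} \frac{n}{p} \pmod{n}.$$
So the statement $\sum_{i=1}^{n-1} i^{\phi(n)} \equiv \mu \pmod n$ is equivalent to $\sum_{p \mid n} \frac{n}{p} \equiv -\mu \pmod n$, i.e. $n \mid \left( \mu + \sum_{p \mid n} \frac{n}{p} \right)$.

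It remains to show that $n \mid \left(\mu + \sum_{p \mid n} \frac{n}{p}\right)$ if and only if $p^s \mid (n/p + \mu)$ for every prime power $p^s \mid\mid n$. I would do this prime-by-prime using the Chinese Remainder Theorem: writing $n = \prod p_j^{s_j}$, divisibility by $n$ is equivalent to divisibility by each $p_j^{s_j}$. Fix one prime power $q = p^s \mid\mid n$ and examine $\mu + \sum_{p \mid n} \frac{n}{p}$ modulo $p^s$. For every prime divisor $\ell \neq p$ of $n$, the term $\frac{n}{\ell}$ is still divisible by $p^s$ (since $p^s \mid\mid n$ and we only removed a factor of $\ell$), hence vanishes mod $p^s$. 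The only surviving term is $\frac{n}{p}$, so $\mu + \sum_{p\mid n} \frac{n}{p} \equiv \mu + \frac{n}{p} \pmod{p^s}$, and $p^s$ divides the left side iff $p^s \mid (n/p + \mu)$. Running this over all prime powers exactly dividing $n$ and reassembling via CRT yields the claim.

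I do not anticipate a serious obstacle; the proof is essentially a two-step unwinding (invoke Lemma \ref{lema1}, then localize at each prime). The only points requiring a little care are: checking the parity of $\phi(n)$ and disposing of the degenerate cases $n \in \{1, 2\}$; and being precise that "$p-1 \mid \phi(n)$ for all $p \mid n$" so that the sum in Lemma \ref{lema1} really ranges over all prime divisors of $n$. Everything else is routine bookkeeping with the Chinese Remainder Theorem.
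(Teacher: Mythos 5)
Your proof is correct and follows essentially the same route as the paper: apply Lemma \ref{lema1} with $k=\phi(n)$ (using that $p-1\mid\phi(n)$ for every $p\mid n$) to reduce the power sum to $-\sum_{p\mid n} n/p$, then localize at each prime power $p^s\mid\mid n$ where only the term $n/p$ survives, and reassemble with the Chinese Remainder Theorem. Your explicit handling of the parity of $\phi(n)$ and the cases $n\in\{1,2\}$ is a small point of extra care that the paper omits, but the argument is the same.
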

\begin{proof}
Let us denote $S=\displaystyle \sum_{i=1}^{n-1}i^{\phi(n)}$.

Now, let us assume that $S\equiv \mu \pmod{n}$ and let $p\mid n$ be a prime. Then, $S\equiv \mu \pmod{p^{\nu_p(n)}}$ and, due to Lemma \ref{lema1}, we have that
$$\mu\equiv S\equiv - \sum_{\substack{p \mid n}}\frac{n}{p}\equiv -  \frac{n}{p} \pmod{p^{\nu_p(n)}},$$
so $p^{\nu_p(n)} | (n/p + \mu)$ as claimed.

Conversely, let us assume that $p^{\nu_p(n)} \mid (n/p + \mu)$ for every prime $p\mid n$. Consequently, $\mu\equiv -n/p\pmod{p^{\nu_p(n)}}$ for every prime $p\mid n$. Then, Lemma \ref{lema1} together with the Chinese remainder theorem imply that $S\equiv \mu\pmod{n}$, and the result follows.
\end{proof}

The last lemma that we are about to present involves the arithmetic derivative \cite{BAR}. The proof of this result can be found in \cite[Corollary 2]{VIC} and it is a rather direct consequence of the definition of the arithmetic derivative of a number $n$
$$n'=n\sum_{p\mid n}\frac{\nu_p(n)}{p}.$$

\begin{lemma}
\label{nada}
Let $\mu \in \{-1,1\}$. If $n'=an - \mu$ for some $a\in\mathbb{N}$, then $n$ is square-free.
\end{lemma}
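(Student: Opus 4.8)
The plan is to prove the contrapositive in the sharper, direct way: assume $n' = an - \mu$ with $\mu \in \{-1,1\}$ and derive square-freeness by examining the $p$-adic structure of the arithmetic derivative. Write $n = \prod_{i=1}^{k} p_i^{n_i}$, so that by definition $n' = n \sum_{i=1}^{k} n_i / p_i$. Suppose, for contradiction, that some exponent $n_j \geq 2$. Then $p_j^{n_j - 1}$ divides $n$, and I would track the exact power of $p_j$ dividing $n'$. The key observation is that $n' = \sum_{i} n_i (n/p_i)$; every term $n_i (n/p_i)$ with $i \neq j$ is divisible by $p_j^{n_j}$ (since $p_j^{n_j} \mid n \mid (n/p_i)$ as $p_i \neq p_j$), while the term $n_j (n/p_j)$ is divisible by $p_j^{n_j - 1}$ but, after separating the factor $p_j^{n_j-1}$, leaves $n_j \cdot (n/p_j^{n_j})$, whose $p_j$-valuation is $v_{p_j}(n_j)$. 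Hence $v_{p_j}(n') \geq n_j - 1 \geq 1$, so $p_j \mid n'$.

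On the other hand, $p_j \mid n$, so from $n' = an - \mu$ we get $\mu = an - n' \equiv 0 \pmod{p_j}$, forcing $p_j \mid \mu = \pm 1$, which is impossible. This contradiction shows every $n_i = 1$, i.e.\ $n$ is square-free. I would present this as the whole argument, citing \cite[Cor. 2]{VIC} as the origin but noting the proof is short enough to include; alternatively, if the paper prefers to lean entirely on the citation, I would simply restate that the result follows from \cite[Cor. 2]{VIC} and move on. The only subtlety worth spelling out is the edge case where $n$ itself is a prime power $p^s$ with $s \geq 2$: then $n' = s p^{s-1}$, and $p \mid n'$ still holds (since $s - 1 \geq 1$), so the same divisibility-by-$p$ contradiction with $\mu = \pm 1$ applies; and if $n = 1$ the statement is vacuous, while if $n = p$ is prime then $n$ is already square-free.

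The main obstacle — really the only place one must be slightly careful — is the valuation bookkeeping at the repeated prime $p_j$: one must confirm that the "small" term $n_j(n/p_j)$ cannot conspire with the divisibility requirement, but since we only need $p_j \mid n'$ (not the exact valuation), the inequality $v_{p_j}(n') \geq n_j - 1 \geq 1$ suffices and there is nothing delicate. I would therefore keep the proof to a few lines: factor $n$, assume an exponent exceeds $1$, show $p_j$ divides both $n$ and $n'$, conclude $p_j \mid \mu$, and contradict $\mu \in \{-1,1\}$.
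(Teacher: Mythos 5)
Your proof is correct. Note, though, that the paper does not actually prove Lemma \ref{nada} at all: it simply cites \cite[Cor.\ 2]{VIC} and moves on. So your self-contained argument is not so much a ``different route'' as a filled-in one, and it is the standard one: if $p^{n_j}\mid\mid n$ with $n_j\ge 2$, then every term of $n'=\sum_i n_i\,(n/p_i)$ is divisible by $p_j^{\,n_j-1}$, hence $p_j\mid n'$; combined with $p_j\mid n$ and $\mu=an-n'$ this forces $p_j\mid\mu=\pm1$, a contradiction. This is exactly the mechanism behind the cited corollary (a repeated prime factor of $n$ divides $\gcd(n,n')$), and your version has the advantage of making the lemma independent of the reference; it also visibly generalizes to arbitrary $\mu$ (any prime whose square divides $n$ must divide $\mu$), which is in the spirit of the paper's later $\mu$-Sondow propositions. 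One small slip in the write-up: the chain ``$p_j^{n_j}\mid n\mid (n/p_i)$'' is literally false ($n$ does not divide $n/p_i$); what you mean, and what is true, is that $v_{p_j}(n/p_i)=v_{p_j}(n)=n_j$ for $i\neq j$ because dividing by $p_i\neq p_j$ does not change the power of $p_j$. With that phrasing fixed, the argument is complete, including the (unnecessary but harmless) separate treatment of the prime-power and $n=1$ cases.
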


\section{The weak primary pseudoperfect numbers}

After this short review of technical lemmas, we can provide an analogue to Proposition \ref{defsGiuga} in the context of weak primary pseudoperfect numbers.

\begin{theorem}
\label{charwpsn}
Let $n$ be a positive integer. Then, the following are equivalent.
\begin{itemize}
\item[i)]
$\frac{1}{n}+\sum_{p \mid n}\frac{1}{p} \in \mathbb{N}$.
\item[ii)]
$\sum_{p\mid n}\frac{n}{p}+1\equiv 0\pmod{n}$.
\item[iii)]
$n B_{\phi(n)}\equiv 1 \pmod{n}$.
\item[iv)]
$\sum_{i=1}^{n-1} i^{\phi(n)} \equiv 1 \pmod n$.
\item[v)]
$p \mid (n/p+1)$ for every prime divisor $p\mid n$, .
\item[vi)]
$n'=an-1$, for some $a \in \mathbb{N}$.
\end{itemize}
\end{theorem}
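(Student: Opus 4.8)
The plan is to establish the equivalences by proving a cycle of implications together with a couple of direct two-way links, exploiting the technical lemmas from Section 2 as much as possible. First I would observe that (i) and (ii) are a purely formal restatement of one another: clearing denominators in $\frac1n+\sum_{p\mid n}\frac1p$ (after noting that the common denominator may be taken to be the radical of $n$, or more simply $n$ itself since each $\frac{n}{p}$ is an integer) shows that this sum is an integer precisely when $n$ divides $1+\sum_{p\mid n}\frac{n}{p}$. No real content there, just bookkeeping with the observation that $\sum_{p\mid n}\frac{1}{p}=\frac1n\sum_{p\mid n}\frac{n}{p}$.

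Next I would handle (iii) $\Leftrightarrow$ (iv) $\Leftrightarrow$ (v) using the machinery already in place. Lemma \ref{lema1} with $k=\phi(n)$ (which is even for $n>2$; the tiny cases $n=1,2$ are checked by hand) gives at once that $\sum_{i=1}^{n-1}i^{\phi(n)}\equiv nB_{\phi(n)}\pmod n$, which is (iii) $\Leftrightarrow$ (iv). For (iv) $\Leftrightarrow$ (v) I would invoke Lemma \ref{foro1} with $\mu=1$: it states exactly that $\sum_{i=1}^{n-1}i^{\phi(n)}\equiv 1\pmod n$ if and only if $p^s\mid(n/p+1)$ for every prime power $p^s\mid\mid n$. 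The only gap to fill is that this last prime-power condition is equivalent to the bare prime condition in (v), namely $p\mid(n/p+1)$ for every prime $p\mid n$. This is the one spot requiring an actual argument: if some prime $p$ satisfies $p^2\mid n$, then $p\mid n/p$, so $p\mid(n/p+1)$ would force $p\mid 1$, a contradiction; hence any $n$ satisfying (v) is squarefree, and for squarefree $n$ the conditions ``$p\mid(n/p+1)$'' and ``$p^1\mid\mid n$ with $p\mid(n/p+1)$'' coincide. I would also note that Lemma \ref{lema1}'s middle term $-\sum_{p\mid n,\,p-1\mid k}\frac{n}{p}$ with $k=\phi(n)$ simplifies to $-\sum_{p\mid n}\frac{n}{p}$ because $p-1\mid\phi(n)$ for every $p\mid n$; this is what ties (iv) directly back to (ii) and closes the loop among (i)--(v).

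Finally I would link (vi) to the rest. Writing $n=\prod p_i^{s_i}$, the arithmetic derivative is $n'=n\sum_i\frac{s_i}{p_i}$, and when $n$ is squarefree this is just $n'=\sum_{p\mid n}\frac{n}{p}$. So on squarefree $n$, condition (vi), $n'=an-1$ for some $a\in\mathbb N$, says $\sum_{p\mid n}\frac{n}{p}\equiv -1\pmod n$, i.e. $\sum_{p\mid n}\frac{n}{p}+1\equiv 0\pmod n$, which is (ii). The remaining subtlety is squarefreeness: (vi) does not presuppose it, so I would apply Lemma \ref{nada} with $\mu=1$ (it states precisely that $n'=an-1$ forces $n$ squarefree) to reduce to the squarefree case, after which the computation above gives (vi) $\Rightarrow$ (ii); conversely (ii) already entails (v), hence squarefreeness, hence $n'=\sum_{p\mid n}\frac{n}{p}$ and (ii) $\Rightarrow$ (vi).

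I expect the main obstacle to be purely organizational rather than deep: making sure the squarefreeness hypothesis is introduced at the right moment (it is a \emph{consequence} of each of conditions (v) and (vi), not an assumption), so that one does not accidentally use $n'=\sum_{p\mid n}\frac np$ before it is justified. The cleanest route is probably to prove (i) $\Leftrightarrow$ (ii) $\Leftrightarrow$ (iii) $\Leftrightarrow$ (iv) $\Leftrightarrow$ (v) first (these chain through Lemmas \ref{lema1} and \ref{foro1} with the elementary squarefreeness remark slotted into the (iv) $\Leftrightarrow$ (v) step), and then append (v) $\Rightarrow$ (vi) and (vi) $\Rightarrow$ (v) as a separate short paragraph using Lemma \ref{nada} and the formula for the arithmetic derivative. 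Everything else is routine denominator-clearing and the Chinese Remainder Theorem, both already packaged inside the cited lemmas.
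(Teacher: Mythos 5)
Your proposal is correct, and for conditions (i)--(v) it follows essentially the same path as the paper: (i) $\Leftrightarrow$ (ii) by clearing denominators, (ii) $\Leftrightarrow$ (iii) $\Leftrightarrow$ (iv) via Lemma \ref{lema1} with $k=\phi(n)$ and the observation that $p-1\mid\phi(n)$, and (iv) $\Leftrightarrow$ (v) via Lemma \ref{foro1} with $\mu=1$ (your explicit remark that the prime-power condition forces squarefreeness, so that it collapses to the bare prime condition of (v), is a detail the paper leaves implicit but is needed since it proves only the one-way cycle). The one place you genuinely diverge is the link to (vi): the paper proves v) $\Rightarrow$ vi) by noting that (v) gives $n\mid\prod_{p\mid n}(n/p+1)$ and then expanding $\prod_{p\mid n}(n/p+1)=An+n'+1$ for squarefree $n$, whereas you attach (vi) to (ii) instead, reading $n\mid\sum_{p\mid n}\frac{n}{p}+1$ directly as $n\mid n'+1$ once squarefreeness (obtained from (v) in one direction and from Lemma \ref{nada} in the other) gives $n'=\sum_{p\mid n}\frac{n}{p}$. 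Your route avoids the product identity entirely and is slightly more economical; the paper's version has the small advantage of deriving (vi) from the local condition (v) without passing back through the global congruence. Both are valid, and your handling of the small cases $n=1,2$ (where $\phi(n)$ is odd, so Lemma \ref{lema1} does not literally apply) is a point of care the paper omits.
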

\begin{proof}
\ \
\begin{itemize}
\item[i) $\Rightarrow$ ii)] Trivial.
\item[ii) $\Rightarrow$ iii)] Just apply Lemma \ref{lema1}, with $k=\phi(n)$, taking into account that $p-1\mid\phi(n)$ for every $p\mid n$.
\item[iii) $\Rightarrow$ iv)] Lemma \ref{lema1} with $k=\phi(n)$, again.
\item[iv) $\Rightarrow$ v)] Just apply Lemma \ref{foro1} with $\mu=1$.
\item[v) $\Rightarrow$ vi)] Let us assume that $p \mid (n/p+1)$ for every prime $p\mid n$. First, this implies that $n$ is squarefree, so that $n'=\sum_{p\mid n}\frac{n}{p}$. Furthermore, it also implies that $n\mid \prod_{p\mid n}(n/p+1)$. Now, it is rather straightforward to see that, for a squarefree $n$, $\prod_{p\mid n}(n/p+1)=An+n'+1$ for some integer $A$ and the claim follows.
\item[vi) $\Rightarrow$ i)] Let us assume that $n'=an-1$, for some $a \in \mathbb{N}$. Lemma \ref{nada} (with $\mu=1$) implies that $n$ is squarefree, and $n'=n\sum_{p\mid n}\frac{1}{p}$. Thus, $an-1=n'=n\sum_{p\mid n}\frac{1}{p}$ implies that $\displaystyle{\sum_{i=1}^k\frac{1}{p_i} +\frac{ 1}{n}=a\in \mathbb{N}}$, as claimed.
\end{itemize}
\end{proof}

\begin{remark}\label{rem1}
It is interesting to compare Theorem \ref{charwpsn} with Proposition \ref{defsGiuga}. First of all, we note that Theorem \ref{charwpsn} does not require $n$ to be composite. However, it is easily seen that $2$ is the only prime number that is also a weak primary pseudoperfect number. On the other hand, it is noteworthy that all the conditions from Theorem \ref{charwpsn} are counterparts of conditions from Proposition \ref{defsGiuga} just by substituting a $(-1)$ for a $1$. We will get back to this idea later.
\end{remark}

We end the section showing that a well-known property of primary pseudoperfect numbers still remains true for weak primary pseudoperfect numbers. This proposition can be proved using any of the equivalent conditions from Theorem \ref{charwpsn}.

\begin{proposition}
Let $n$ be a weak primary pseudoperfect number such that $n+1$ is prime. Then, $n(n+1)$ is a weak primary pseudoperfect number.
\end{proposition}
\begin{proof}
Let us assume that $n$ is a weak primary pseudoperfect number. Then, due to Theorem \ref{charwpsn} i), we have that $\displaystyle \frac{1}{n} + \sum_{p|n}\frac{1}{p}=a$ for some $a\in\mathbb{N}$.

Now,
$$\frac{1}{n(n+1)} + \sum_{p|n(n+1) }\frac{1}{p}=\frac{1}{n(n+1)} + \sum_{p|n }\frac{1}{p}+\frac{1}{n+1}=\frac{1}{n} + \sum_{p|n}\frac{1}{p}=a$$
and the result follows.
\end{proof}

\section{The $\mu$-Sondow numbers}

As we pointed out in Remark \ref{rem1}, there is a very strong resemblance between Theorem \ref{charwpsn} and Proposition \ref{defsGiuga}. This suggests a possible generalization in the following direction, that we already presented in the introduction.

\setcounter{definition}{1}
\begin{definition}
Given $ \mu \in \mathbb{Z}$, a \textit{$\mu$-Sondow number} is an integer $n$ such that $p^{\nu_p(n)} \mid (n/p +\mu)$ for every prime $p \mid n$.
\end{definition}

As expected, $\mu$-Sondow numbers admit several equivalent characterizations that we provide in the following theorem.

\begin{theorem} \label{charmu}
Let $\mu \in \mathbb{Z}$, and let $n$ a positive integer. Then, the following are equivalent.
\begin{itemize}
\item[i)]
For every prime $p\mid n$, $p^{\nu_p(n)} \mid (n/p+\mu)$.
\item[ii)]
$\sum_{i=1}^{n-1} i^{\phi(n)} \equiv \mu \pmod n$.
\item[iii)]
$n B_{\phi(n)}\equiv \mu \pmod{n}$.
\item[iv)]
$\frac{\mu}{n}+\sum_{p \mid n}\frac{1}{p} \in \mathbb{Z}$.
\item[v)]
$\sum_{p\mid n}\frac{n}{p}+\mu\equiv 0\pmod{n}$.
\end{itemize}
\end{theorem}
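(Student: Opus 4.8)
The plan is to prove the cycle of implications i) $\Rightarrow$ ii) $\Rightarrow$ iii) $\Rightarrow$ ii) $\Rightarrow$ v) $\Rightarrow$ i), together with the equivalence iv) $\Leftrightarrow$ v), leaning as heavily as possible on the technical lemmas of Section 2, which were tailored for exactly this purpose. The equivalence i) $\Leftrightarrow$ ii) is precisely Lemma \ref{foro1}, so that part is immediate and requires no further work. For ii) $\Leftrightarrow$ iii), I would invoke Lemma \ref{lema1} with $k=\phi(n)$: since $p-1\mid\phi(n)$ for every prime $p\mid n$ (a standard property of the totient), the lemma gives $\sum_{i=1}^{n-1} i^{\phi(n)} \equiv n\cdot B_{\phi(n)} \pmod n$, so the congruence to $\mu$ holds for one side iff it holds for the other. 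One minor point to check: Lemma \ref{lema1} requires $n>1$ and $k$ even; the case $n=1$ must be handled separately (all conditions hold vacuously or trivially, since sums over $p\mid 1$ are empty and $\phi(1)=1$), and $\phi(n)$ is even for $n>2$ while $n=2$ can be checked by hand.

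For the remaining equivalence, the cleanest route is v) $\Leftrightarrow$ iv) and v) $\Leftrightarrow$ ii) (or directly v) $\Leftrightarrow$ i)). The equivalence iv) $\Leftrightarrow$ v) is essentially a clearing-of-denominators argument: writing $\frac{\mu}{n}+\sum_{p\mid n}\frac1p$ over the common denominator $n$ — legitimate since $p\mid n$ for each summand — the numerator is exactly $\mu + \sum_{p\mid n}\frac np$, so the fraction lies in $\mathbb{Z}$ iff $n$ divides that numerator, which is condition v). For v) $\Rightarrow$ i), I would argue locally: if $p^s\mid\mid n$, then reducing $\sum_{q\mid n}\frac nq + \mu \equiv 0 \pmod n$ modulo $p^s$ kills every term $\frac nq$ with $q\ne p$ (each such term is divisible by $p^s$), leaving $\frac np + \mu \equiv 0 \pmod{p^s}$, which is i). Conversely, for i) $\Rightarrow$ v), the same local congruences $\frac np + \mu \equiv 0 \pmod{p^s}$ for all $p^s\mid\mid n$ combine via the Chinese Remainder Theorem to give $\sum_{q\mid n}\frac nq + \mu \equiv 0\pmod n$ — here one uses that modulo $p^s$ the sum $\sum_{q\mid n}\frac nq$ collapses to $\frac np$. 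This is in fact the same CRT bookkeeping already performed inside the proof of Lemma \ref{foro1}.

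I do not expect a serious obstacle here; the main subtlety — and the thing most likely to trip up a careless write-up — is that, unlike the weak primary pseudoperfect case ($\mu=1$), a $\mu$-Sondow number need \emph{not} be squarefree, so the arithmetic-derivative characterization (clause vi in the previous theorem) has no clean analogue and is correctly absent from this statement. Consequently every step must be phrased in terms of prime \emph{powers} $p^s\mid\mid n$ rather than primes $p\mid n$, and I would be careful that the passage between the ``global'' conditions iv), v) (which only see the radical of $n$) and the ``local'' condition i) (which sees the full prime-power factorization) is handled through CRT exactly as in Lemma \ref{foro1}. A secondary bit of care: the congruences in ii), iii) are congruences of rationals (the numerator convention recalled before Lemma \ref{lema1}), so I would note that $B_{\phi(n)}$ has denominator coprime to... — actually this is automatically fine because Lemma \ref{lema1} already packages $n\cdot B_k$ as an integer-valued expression congruent to an integer sum mod $n$, so no von Staudt–Clausen analysis is needed. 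In short, the proof is a short assembly of Lemmas \ref{lema1} and \ref{foro1} plus one elementary fraction manipulation, with the only genuine content being the non-squarefree caveat that dictates the prime-power formulation throughout.
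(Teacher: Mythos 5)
Your proposal is correct and follows essentially the same route as the paper: i) $\Leftrightarrow$ ii) is Lemma \ref{foro1}, ii) $\Leftrightarrow$ iii) is Lemma \ref{lema1} with $k=\phi(n)$, and iv) $\Leftrightarrow$ v) is the same clearing of denominators. The only (harmless) variation is that you close the loop by linking v) directly to i) with a local mod-$p^s$ collapse plus CRT, whereas the paper links iv) back to ii) via Lemma \ref{lema1}; both rest on the same observation, and your explicit handling of the $n=1,2$ edge cases is if anything more careful than the paper's.
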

\begin{proof} \ \
\begin{itemize}
\item[i) $\Longleftrightarrow$ ii)] Lemma \ref{foro1}.
\item[ii) $\Longleftrightarrow$ iii)] Lemma \ref{lema1} with $k=\phi(n)$.
\item[ii) $\Longleftrightarrow$ iv)] Lemma \ref{lema1} with $k=\phi(n)$, and taking into account that $p-1\mid\phi(n)$ for every $p\mid n$.
\item[iv) $\Longleftrightarrow$ v)] Trivial.
\end{itemize}
\end{proof}

In what follows, we will denote by $\mathfrak{S}_{\mu}$ the set of $\mu$-Sondow numbers. In addition, we will consider $\mathbb{G}$ the set of Giuga numbers, $\mathbb{W}$ the set of the weak primary pseudoperfect numbers, and $\mathbb{P}$ the set of prime numbers. The following result is an easy consequence of Theorem \ref{charmu} a provides the relationship between these sets.

\begin{corollary}
Let $n>1$ be a positive integer. Then,
\begin{itemize}
\item[i)]  $n \in \mathbb{G}$ if and only if $n$ is composite and $n\in \mathfrak{S}_{-1}$.
\item[ii)]  $n \in \mathbb{W}$  if and only if $n\in \mathfrak{S}_{1}$.
\end{itemize}
In other words,
$$\mathfrak{S} _{1} = \mathbb{W},\textrm{ and } \mathfrak{S} _{-1} =  \{1\} \cup \mathbb{P} \cup \mathbb{G}.$$
\end{corollary}

The following results provide a method to construct $\mu$-Sondow numbers starting from Giuga numbers or weak primary pseudoperfect numbers. Recall that the radical of a positive integer $n$, $rad(n)$, is just the product of the distinct primes dividing $n$.

\begin{proposition}
Let $\mu>1$ be a positive integer. Then, $\mu n \in  \mathfrak{S}_{\mu}$ if and only if $rad(\mu) | n$ and $ n \in  \mathfrak{S} _{1}$.
\end{proposition}
\begin{proof}
First, let us assume that $\mu n \in  \mathfrak{S}_{\mu}$. If $rad(\mu) \nmid n$, there exists a prime $p \mid \mu$ such that $p \nmid n$. Hence, $\nu_p(\mu n)=\nu_p(\mu)$ but $p^{\nu_p(\mu n)} \nmid (n \mu /p + \mu)$, a contradiction. On the other hand, if $n \notin \mathfrak{S} _{1}$ then there exists a prime $p$ such that $p\mid n$ such that $p \nmid (n/p+1)$. Since $\mu n\in \mathfrak{S}_{\mu}$, $p^{\nu_p(\mu n)}\mid (n \mu /p + \mu)=\mu(n/p+1)$. Thus, $p^{\nu_p(\mu n)}\mid \mu$ so $p^{\nu_p(\mu n)+1}\mid\mu n$, which is also a contradiction.

Conversely, let us assume that $rad(\mu) | n$ and $n \in \mathfrak{S} _{1}$. Note that $n \in \mathfrak{S} _{1}$ implies that $n$ is square-free, for if $p^2|n$ for some prime $p$, then $p$ divides $n/p$ and also $n/p+1$, which is impossible. Now, since $rad(\mu)\mid n$ and $n$ is square-free, for every prime $p\mid \mu n$ it follows that $p^{\nu_p(\mu n)-1}\mid \mu$ and also that $p^{\nu_p(\mu n)}\mid (n \mu/p+ \mu  )$. Thus, $n\mu \in \mathfrak{S}_{\mu}$ as claimed.
\end{proof}

\begin{proposition}
Let $\mu>1$ be a positive integer. Then, $\mu n \in  \mathfrak{S}_{-\mu}$ if and only if $rad(\mu) | n$ and $n \in \mathbb{G} \cup \mathbb{P}$.
\end{proposition}
\begin{proof}
Almost identical to the proof of the previous proposition.
\end{proof}

\begin{remark}
The two propositions above can be unified in the following way. Let $\mu$ be an integer with $|\mu|>1$. Then, $|\mu| n \in  \mathfrak{S}_{\mu}$ if and only if $rad(\mu) | n$ and $n \in \mathfrak{S}_{\mu/|\mu|}$.
\end{remark}

Now, the following result goes, in some sense, in the opposite direction.

\begin{proposition}
Let $n$ be a positive integer such that $n \in  \mathfrak{S}_{\mu}$, and let $\delta=\gcd(n,\mu)$. Then, $n/\delta$ is square-free and $n/\delta\in \mathfrak{S}_{\mu/\delta}$.
\end{proposition}
\begin{proof}
Put $\displaystyle n=\prod_{p|n} p^{\nu_p(n)}$ and let us assume that $n \in  \mathfrak{S}_{\mu}$.

Let $p\mid n$ be a prime such that $p\nmid \mu$. Since $n \in  \mathfrak{S}_{\mu}$, it follows that $p^{\nu_p(n)}\mid n/p+\mu$. Consequently, $p^{\nu_p(n)-1}\mid \mu$ and $\nu_p(n)=1$.

On the other hand, let $p\mid n$ be a prime such that $p\mid \mu$. Reasoning again in the same way, we get that $p^{\nu_p(n)-1}\mid \mu$ and, consequently, $\nu_p(n)\leq \nu_p(\mu)+1$.

Let us assume for a moment that $\nu_p(n)<\nu_p(\mu)+1$. Then, since $n \in  \mathfrak{S}_{\mu}$
$$p^{\nu_p(n)}\mid n/p+\mu=p^{\nu_p(n)-1}\Big( n'+ p^{\nu_p(\mu)+1-\nu_p(n)}\mu' \Big)$$
for some $n'$ and $\mu'$ coprime to $p$. Then, it follow that $p\mid n'+ p^{\nu_p(\mu)+1-\nu_p(n)}\mu'$ and, if $\nu_p(n)<\nu_p(\mu)+1$, we get that $p\mid n'$. This is a contradiction which means that it must be $\nu_p(n)=\nu_p(\mu)+1$ and the result follows.
\end{proof}

We close this section by providing a series of examples in which we apply the previous propositions.

\begin{example}[$\mu = 8$]
In this case $ 8\mathbb {N} \cap \mathfrak {S} _{8} = 8 (2 \mathbb{N} \cap \mathbb{W}) $. All known weak primary pseudoperfects numbers (except $1$) are even, and when multiplied by 8 they give rise to $8$-Sondow numbers. Moreover, these are exactly the only $8$-Sondow numbers which are multiples of 8.
$$\mathbb{W}=\{1, 2, 6, 42, 1806, 47058, 2214502422, 52495396602, 8490421583559688410706771261086,\dots\}$$
$$\mathfrak{S}_8=\{1, 3, 8 \cdot 2, 8 \cdot 6, 8 \cdot42, 8 \cdot 1806, 8 \cdot 47058, 8 \cdot 2214502422, 8 \cdot 52495396602, 8 \cdot 8490421583559688410706771261086,\dots\}$$
\end{example}

\begin{example}[$ \mu = -5 $]
The only $(-1)$-Sondow numbers up to $1.9 \times 10 ^{33}$ which are multiples of $5$ are $\{5,30\}$. Thus, we can say that $\{25,150\} $ are the only $(-5)$-Sondow numbers multiples of 5 up $ 1.9 \times 5 \times 10^{33}$.
\end{example}

\begin{example}[$\mu = 5$]
No known weak primary pseudoperfect number is a multiple of 5. Hence, we cannot construct any $5$-Sondow numbers using them as a starting point and we can say that there are no $5$-Sondow numbers multiples of 5 up to 5 times the highest known weak primary pseudoperfect number (approximately $4 \times 10^{31} $).
\end{example}

\section{$\mu$-Sondow Numbers, the Erdős-Moser Equation, and some open Problems}

The Erdős–Moser conjecture states that the Diophantine equation $ 1^k+2^k+...+ (m-1)^k =m^k$, has no solution for positive integers $k$ and $m$ with $k>1$. This is still an open problem, even if it is known that possible solutions must have rather big values for $m$. In fact, it is known that a solution of the Erdős-Moser equation must have \cite{moree1}
$$m > 2.7139 \times 10^{1,667,658,416}.$$
The following result regarding possible solutions of the Erdős-Moser equation can be found in \cite[Theorem 4]{moree2} and gives a slightly improved version of Moser's original proof.

\begin{theorem}
Suppose that $(m,k)$ is a solution of the Erdős-Moser equation with $k\geq 2$. Then,
\begin{itemize}
\item[i)] $m>1.485\times 10^{9321155}$.
\item[ii)] $k$ is even, $m\equiv 3\pmod{8}$, $m\equiv \pm1\pmod{3}$.
\item[iii)] $m-1$, $(m+1)/2$, and $2m+1$ are all square-free.
\item[iv)] If $p$ divides at least one of the above integers, then $(p-1)\mid k$.
\item[v)] The number $(m^2-1)(4m^2-1)/12$ is square-free  and has at least $4990906$ prime factors.
\end{itemize}
\end{theorem}

The bound in i) is not the best known but the best Moser's method yields.

A close look at the proof of this result reveals a relation between possible solutions of the Erdős-Moser equation and $\mu$-Sondow numbers for $\mu \in \{1,2,4\}$. In fact, the following holds.

\begin{corollary}
Suppose that $(m,k)$ is a solution of the Erdős-Moser equation with $k\geq 2$. Then,
\begin{itemize}
\item[i)] $m+1,2m-1\in \mathfrak{S}_{2}$.
\item[ii)] $m-1\in \mathfrak{S}_{1}$.
\item[iii)] $2m+1\in \mathfrak{S}_{4}$.
\end{itemize}
\end{corollary}
\begin{proof}
See the proof of Theorem 4 in \cite{moree2} and recall the definition of $\mu$-Sondow numbers.
\end{proof}

We close this section and the paper with some open problems related to the existence of  $\mu$-Sondow numbers. It is obvious that for every positive integer $n$ there exists $ 0<\mu<n$ such that $n \in \mathfrak{S} _ \mu$ and $ n \in \mathfrak{S}_{-(n-\mu)}$. In addition, it is also straightforward that $\mathfrak{S}_{\mu} \neq \emptyset$ for every integer $\mu$ because we always have that $ 1 \in \mathfrak {S}_{\mu}$. However, the existence of $\mu$-Sondow numbers different from $1$ is an open problem. In this regard, we present the following conjecture.

\begin{conjecture}\label{conj}
\begin{itemize}
\item[i)] For every integer $\mu\notin\{0,1,-1,2,-2,4,16\}$, the set $\mathfrak{S}_{\mu} \cap [2,|\mu|]$ is not empty.
\item[ii)] For every integer $\mu$, the set $(|\mu|,\infty) \cap \mathfrak{S}_{\mu}$ is not empty.
\end{itemize}
\end{conjecture}

We note that we have been able to check computationally that Conjecture \ref{conj} ii) is true for every integer $-145<\mu<673$. However, for $\mu=-145$ and for $\mu=673$ we got $(|\mu|,10^{10}] \cap \mathfrak{S}_{\mu} =\emptyset$.

\section*{Acknowledgment}
The authors wish to thank Pieter Moree for his useful comments and suggestions, that helped us to improve the paper. Daniel Sadornil is partially supported by the Spanish Government under Project PID2019-110633GB-I00 from  MCIN/AEI/10.13039/501100011033

\end{document}